\providecommand{\U}[1]{\protect\rule{.1in}{.1in}}
\newtheorem {theorem}{Theorem}
\providecommand{\U}[1]{\protect\rule{.1in}{.1in}}
\newtheorem {definition}[theorem]{Definition}
\newtheorem {remark}[theorem]{Remark}
\newtheorem {rmk}{Remark}
\begin{document}
\title{\textbf{Some Observations About ``Moving Norms'' in Normed Algebras}}
\author{Eliahu Levy}
\address{Eliahu Levy\\Department of Mathematics\\Technion 
-- Israel Institute of Technology\\Technion City, Haifa 3200003, Israel}
\email{eliahu@math.technion.ac.il}

\date{}
\thanks{I am much indebted and grateful to Professor Yair Censor of the Haifa University, this note stemming from joint work with him, and to Professors Yoav Benyamini, Simeon Reich and Orr Shalit of the Technion, Haifa for their constant kind advice.}
 
\begin{abstract}
In this note one tries to venture into a study of some notions, in the context of a (unital) normed algebra, in particular the algebra of operators on a Hilbert space. Namely, one considers ``moving norms'', i.e.\ norming an element minus a scalar multiple of the unity and related notions, and goes into obtaining some rather nice assertions.
\end{abstract}

\maketitle

\tableofcontents

\subsection{The Moving Norm and Augmented Moving Norm}
Recall, that a \texttt{normed (unital) algerbra} is a (unital) algebra $\mathcal{A}$ over
$\mathbb{R}$ or $\mathbb{C}$ as scalars, which is a normed vector space satisfying
\begin{eqnarray}
&&\Vert xy\Vert\leq\Vert x\Vert\,\Vert y\Vert\,\,\text{ for any }x,y\in\mathcal{A}, \\
&& \Vert \mathbf{1}\Vert=1,\text{ where }\mathbf{1}\text{ is the unit element}.
\end{eqnarray}
\begin{definition}
The \texttt{moving norm} of an $x\in\mathcal{A}$ is the (continuous) function on $\lambda\in\mathbb{R}^+$
given by
\begin{equation}
\Vert x\Vert_{m}(\lambda):=\Vert x-\lambda\mathbf{1}\Vert,\quad \lambda\geq 0.
\end{equation}
\end{definition}

Then one has

\begin{eqnarray}
&&\text{For }x\in\mathcal{A}\text{ and }c \geq 0,\quad
\Vert cx\Vert_{m}(c\lambda)=c\Vert x\Vert_{m}(\lambda);\label{eq:m-c} \\
&&\text{For }x,y\in\mathcal{A}\text{ and }\lambda,\mu\geq 0,\quad
\Vert x+y\Vert_{m}(\lambda+\mu) \leq
\Vert x\Vert_{m}(\lambda)+\Vert y\Vert_{m}(\mu); \label{eq:m-sum}\\
&&\text{Consequently, for }x\in\mathcal{A}\text{ and }\lambda\geq 0 \nonumber \\
&&\Vert x\Vert_{m}(\lambda) \leq 
\inf_{y+z=x,\,\mu+\nu=\lambda,\mu,\nu\geq 0}
\Vert y\Vert_{m}(\mu)+\Vert z\Vert_{m}(\nu).\label{eq:m-inf}
\end{eqnarray}

Define also

\begin{definition}
The \texttt{augmented moving norm} of an $x\in\mathcal{A}$ is the (continuous) function on $\lambda\in\mathbb{R}^+$
given by
\begin{equation}
\Vert x\Vert_{am}(\lambda):=\Vert x-\lambda\mathbf{1}\Vert+\lambda=\Vert x\Vert_a(\lambda)+\lambda,
\quad \lambda\geq 0.
\end{equation}
\end{definition}

This definition is motivated by some somewhat nice properties obtained (Thm.\ \ref{th:am}).

\begin{theorem}\label{th:am}
\begin{eqnarray}
&&\Vert cx\Vert_{am}(c\lambda)=c\Vert x\Vert_{am}(\lambda), 
\text{For }x,y\in\mathcal{A}\text{ and }\lambda\geq 0\label{eq:am-c}\\
&&\Vert x+y\Vert_{am}(\lambda+\mu) \leq
\Vert x\Vert_{am}(\lambda)+\Vert y\Vert_{am}(\mu), \label{eq:am-sum} \\
&&\text{Consequently, for }x\in\mathcal{A}\text{ and }\lambda\geq 0 \nonumber \\
&&\Vert x\Vert_{am}(\lambda) \leq 
\inf_{y+z=x,\,\mu+\nu=\lambda,\mu,\nu\geq 0}
\Vert y\Vert_{am}(\mu)+\Vert z\Vert_{am}(\nu).\label{eq:am-inf} \\
&&\text{For }x,y\in\mathcal{A}\text{ and }\lambda,\mu\geq 0\nonumber \\
&&\Vert x+y\Vert_{am}(\lambda+\mu) \leq
\Vert x\Vert_{am}(\lambda)+\Vert y\Vert_{am}(\mu).\\
&&\Vert xy\Vert_{am}(\lambda\mu) \leq
\Vert x\Vert_{am}(\lambda)\cdot\Vert y\Vert_{am}(\mu).\label{eq:am-p}
\end{eqnarray}
\end{theorem}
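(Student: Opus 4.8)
The plan is to verify each of the stated properties in turn, deriving them from the definition $\Vert x\Vert_{am}(\lambda)=\Vert x-\lambda\mathbf{1}\Vert+\lambda$ together with the corresponding properties already established for the plain moving norm in \eqref{eq:m-c} and \eqref{eq:m-sum}. The homogeneity relation \eqref{eq:am-c} should be immediate: starting from $\Vert cx\Vert_{am}(c\lambda)=\Vert cx-c\lambda\mathbf{1}\Vert+c\lambda$, I would factor out $c\geq 0$ to get $c\Vert x-\lambda\mathbf{1}\Vert+c\lambda=c\bigl(\Vert x-\lambda\mathbf{1}\Vert+\lambda\bigr)$, which is exactly $c\Vert x\Vert_{am}(\lambda)$. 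The subadditivity \eqref{eq:am-sum} follows by writing $\Vert x+y\Vert_{am}(\lambda+\mu)=\Vert (x+y)-(\lambda+\mu)\mathbf{1}\Vert+(\lambda+\mu)$, applying \eqref{eq:m-sum} to bound the norm term by $\Vert x-\lambda\mathbf{1}\Vert+\Vert y-\mu\mathbf{1}\Vert$, and then regrouping the added $\lambda+\mu$ as $\lambda+\mu$ to recover the sum of the two augmented norms. The infimum inequality \eqref{eq:am-inf} is then a formal consequence of \eqref{eq:am-sum} exactly as \eqref{eq:m-inf} was derived from \eqref{eq:m-sum}, obtained by taking the infimum over all decompositions $y+z=x$, $\mu+\nu=\lambda$; I would simply remark that it is immediate.

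The genuinely interesting step, and the one I expect to be the main obstacle, is the submultiplicative property \eqref{eq:am-p}, namely $\Vert xy\Vert_{am}(\lambda\mu)\leq\Vert x\Vert_{am}(\lambda)\cdot\Vert y\Vert_{am}(\mu)$. Here the key algebraic identity to exploit is
\begin{equation}
xy-\lambda\mu\mathbf{1}=(x-\lambda\mathbf{1})(y-\mu\mathbf{1})+\mu(x-\lambda\mathbf{1})+\lambda(y-\mu\mathbf{1}),
\end{equation}
which one checks by expanding the right-hand side: the product term gives $xy-\mu x-\lambda y+\lambda\mu\mathbf{1}$, and adding $\mu x-\lambda\mu\mathbf{1}+\lambda y-\lambda\mu\mathbf{1}$ leaves precisely $xy-\lambda\mu\mathbf{1}$. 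Applying the triangle inequality together with submultiplicativity of the algebra norm $\Vert\cdot\Vert$ then yields
\begin{equation}
\Vert xy-\lambda\mu\mathbf{1}\Vert\leq\Vert x-\lambda\mathbf{1}\Vert\,\Vert y-\mu\mathbf{1}\Vert+\mu\Vert x-\lambda\mathbf{1}\Vert+\lambda\Vert y-\mu\mathbf{1}\Vert.
\end{equation}

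To finish, I would add $\lambda\mu$ to both sides, since by definition $\Vert xy\Vert_{am}(\lambda\mu)=\Vert xy-\lambda\mu\mathbf{1}\Vert+\lambda\mu$. The aim is then to recognize the resulting upper bound as the product $\bigl(\Vert x-\lambda\mathbf{1}\Vert+\lambda\bigr)\bigl(\Vert y-\mu\mathbf{1}\Vert+\mu\bigr)$. Indeed, expanding that product gives exactly $\Vert x-\lambda\mathbf{1}\Vert\,\Vert y-\mu\mathbf{1}\Vert+\mu\Vert x-\lambda\mathbf{1}\Vert+\lambda\Vert y-\mu\mathbf{1}\Vert+\lambda\mu$, matching the bound term for term. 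Since the two factors are precisely $\Vert x\Vert_{am}(\lambda)$ and $\Vert y\Vert_{am}(\mu)$, this establishes \eqref{eq:am-p}. The elegance — and the whole motivation for the ``augmented'' $+\lambda$ correction in the definition — is that the cross terms $\mu\Vert x-\lambda\mathbf{1}\Vert$ and $\lambda\Vert y-\mu\mathbf{1}\Vert$, which would otherwise spoil a clean product estimate, are exactly absorbed once one augments each norm by its shift parameter.
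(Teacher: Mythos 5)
Your proof is correct and takes essentially the same route as the paper's own: the homogeneity and subadditivity claims are reduced to the corresponding properties \eqref{eq:m-c} and \eqref{eq:m-sum} of the plain moving norm, and the product inequality \eqref{eq:am-p} rests on exactly the identity $xy-\lambda\mu\mathbf{1}=(x-\lambda\mathbf{1})(y-\mu\mathbf{1})+\mu(x-\lambda\mathbf{1})+\lambda(y-\mu\mathbf{1})$ that the paper uses, followed by adding $\lambda\mu$ and factoring the bound as $\bigl(\Vert x\Vert_{m}(\lambda)+\lambda\bigr)\bigl(\Vert y\Vert_{m}(\mu)+\mu\bigr)$. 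No gaps.
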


\begin{proof}
The first assertions about multiplying by a scalar and about a sum follow from
(\ref{eq:m-c}) and (\ref{eq:m-sum}).

As for the assertion (\ref{eq:am-p}) about a product, we have
\begin{eqnarray}
&&\text{For }x,y\in\mathcal{A}\text{ and }\lambda,\mu\geq 0\nonumber \\
&&\Vert x\Vert_{m}(\lambda) = \Vert x-\lambda\mathbf{1}\Vert,\,
\Vert y\Vert_{m}(\mu) = \Vert y-\mu\mathbf{1}\Vert,\nonumber \\
&&\text{makeing}\nonumber \\
&&\Vert xy-\lambda y-\mu x +\lambda\mu\mathbf{1}\Vert\leq
\Vert x\Vert_{m}(\lambda)\Vert y\Vert  _{m}(\mu),\nonumber \\
&&\text{thus}\nonumber \\
&& \Vert xy-\lambda\mu\mathbf{1}\Vert\leq
\Vert x\Vert_{m}(\lambda)\Vert y\Vert_{m}(\mu)+\Vert \lambda y-\lambda\mu\mathbf{1}\Vert+
\Vert \mu x-\lambda\mu\mathbf{1}\Vert=\nonumber \\
&& \Vert x\Vert_{m}(\lambda)\Vert y\Vert_{m}(\mu)+
\lambda \Vert y\Vert_{m}(\mu)+\mu\Vert x \Vert_{m}(\lambda).\nonumber \\
&&\Vert xy\Vert_{m}(\lambda\mu)\leq
\Vert x\Vert_{m}(\lambda)\Vert y\Vert_{m}(\mu)+
\lambda \Vert y\Vert_{m}(\mu)+\mu\Vert x \Vert_{m}(\lambda),\nonumber \\
&&\Vert xy\Vert_{am}(\lambda\mu)=\Vert xy\Vert_{m}(\lambda\mu)+\lambda\mu \leq
\Vert x\Vert_{m}(\lambda)\Vert y\Vert_{m}(\mu)+
\lambda \Vert y\Vert_{m}(\mu)+\mu\Vert x \Vert_{m}(\lambda)+\lambda\mu=\nonumber \\
&&\Big(\Vert x\Vert_{m}(\lambda)+\lambda\Big)\cdot\Big(\Vert y\Vert_{m}(\mu)+\mu\Big)=
\Big(\Vert x\Vert_{am}(\lambda)\Big)\cdot\Big(\Vert y\Vert_{am}(\mu)\Big)\nonumber
\end{eqnarray}
Proving (\ref{eq:am-p}).
\end{proof}

\subsection{The Horizon of an Element of Norm $\leq 1$}
Let $x\in\mathcal{A}$ be of norm $\Vert x \Vert\leq 1$. Its augmented moving norm, a continuous function $\mathbb{R}^\to+\mathbb{R}^+$\,\,$\lambda\mapsto\Vert x\Vert_{am}(\lambda)$ is convex (by Thm.\ \ref{th:am}), taking at $0$ the value $\Vert x\Vert\leq 1$ and is $\geq\lambda$, so it turns to $\infty$ as $\lambda\to\infty$. Therefore there is a unique $\lambda_0 \geq 0$ so that at $\lambda_0$ it takes the value $1$ and has value $>1$ at greater places.

\begin{definition}\label{def:Hor}
For an $x\in\mathcal{A}$ of norm $\Vert x \Vert\leq 1$, the \texttt{horizon} of $x$ is defined as
the unique nonnegative number $\operatorname{Hor}(x)$ so that
\begin{equation}
\Vert x\Vert_{am}(\operatorname{Hor}(x))=1,\quad
\lambda>\operatorname{Hor}(x)\Rightarrow\Vert x\Vert_{am}(\lambda)>1.
\end{equation}
\end{definition}

\begin{remark}
Then by the convexity of $\lambda\mapsto\Vert x\Vert_{am}(\lambda)$,  
\begin{eqnarray}
&&\text{either }\Big(\lambda<\operatorname{Hor}(x)\,\Rightarrow\,\Vert x\Vert_{am}(\lambda)<1\Big),
\text{ hence if }\operatorname{Hor}(x)>0\text{ then }\Vert x \Vert=\Vert x \Vert_{am}(0)<1, \\
&&\text{or }\Big(0\leq\lambda \leq \operatorname{Hor}(x)\,\Rightarrow\,\Vert x\Vert_{am}(\lambda)=1\Big),
\text{ in particular }\Vert x \Vert=\Vert x \Vert_{am}(0)=1.
\end{eqnarray}

And clearly, as we always have $\Vert x\Vert_{am}(\lambda) \geq \lambda$,
\begin{equation}
\operatorname{Hor}(x)\leq 1\text{ for every }x.
\end{equation}
\end{remark}

\begin{theorem}\label{th:Hor}
For $x,y\in\mathcal{A}$ with norms $\Vert x\Vert, \Vert y\Vert \leq 1$
\begin{eqnarray}
&&\operatorname{Hor}\Big(tx+(1-t)y\Big)
\geq t\operatorname{Hor}(x)+(1-t)\operatorname{Hor}(y)
\quad 0\leq t \leq 1. \label{eq:Hor-s}\\
&&\operatorname{Hor}(xy) \geq \operatorname{Hor}(x)\cdot\operatorname{Hor}(y).
\label{eq:Hor-p} 
\end{eqnarray}
\end{theorem}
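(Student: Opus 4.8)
The plan is to exploit the defining property of the horizon together with the convexity and the submultiplicative/subadditive inequalities for the augmented moving norm established in Theorem~\ref{th:am}. Recall that $\operatorname{Hor}(x)$ is characterized as the largest $\lambda$ at which $\Vert x\Vert_{am}(\lambda)=1$, and that on $[0,\operatorname{Hor}(x)]$ we have $\Vert x\Vert_{am}\le 1$ (in fact $\le 1$, with value exactly $1$ at the endpoint). So to prove a lower bound $\operatorname{Hor}(w)\ge \lambda_0$ for some element $w$, it suffices to show that $\Vert w\Vert_{am}(\lambda_0)\le 1$; since the augmented moving norm is convex, starts at $\Vert w\Vert\le 1$, and is $\ge\lambda$, the inequality $\Vert w\Vert_{am}(\lambda_0)\le 1$ forces $\operatorname{Hor}(w)\ge\lambda_0$. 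This reduction is the key conceptual move, and it turns both parts into a single clean task: evaluate the augmented moving norm of the combined element at the candidate point.

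For the convexity statement \eqref{eq:Hor-s}, I would set $\lambda=\operatorname{Hor}(x)$, $\mu=\operatorname{Hor}(y)$, and $w=tx+(1-t)y$, and evaluate $\Vert w\Vert_{am}$ at $t\lambda+(1-t)\mu$. Applying the scalar-homogeneity \eqref{eq:am-c} to rewrite $\Vert tx\Vert_{am}(t\lambda)=t\Vert x\Vert_{am}(\lambda)$ and likewise for the $y$-term, and then the subadditivity \eqref{eq:am-sum} with the split $w=(tx)+((1-t)y)$ and $t\lambda+(1-t)\mu=(t\lambda)+((1-t)\mu)$, I would obtain
\begin{equation*}
\Vert w\Vert_{am}\bigl(t\lambda+(1-t)\mu\bigr)\le t\Vert x\Vert_{am}(\lambda)+(1-t)\Vert y\Vert_{am}(\mu)=t\cdot 1+(1-t)\cdot 1=1.
\end{equation*}
By the reduction above this yields $\operatorname{Hor}(w)\ge t\lambda+(1-t)\mu=t\operatorname{Hor}(x)+(1-t)\operatorname{Hor}(y)$, as desired. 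Note this also needs $\Vert w\Vert\le 1$ so that the horizon is defined; that follows from $\Vert w\Vert\le t\Vert x\Vert+(1-t)\Vert y\Vert\le 1$.

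For the multiplicative statement \eqref{eq:Hor-p}, the same reduction applies with the product inequality \eqref{eq:am-p} in place of subadditivity. Taking $\lambda=\operatorname{Hor}(x)$, $\mu=\operatorname{Hor}(y)$, I would evaluate $\Vert xy\Vert_{am}$ at $\lambda\mu$ and use \eqref{eq:am-p} directly:
\begin{equation*}
\Vert xy\Vert_{am}(\lambda\mu)\le\Vert x\Vert_{am}(\lambda)\cdot\Vert y\Vert_{am}(\mu)=1\cdot 1=1,
\end{equation*}
whence $\operatorname{Hor}(xy)\ge\lambda\mu=\operatorname{Hor}(x)\operatorname{Hor}(y)$. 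Again one checks $\Vert xy\Vert\le\Vert x\Vert\,\Vert y\Vert\le 1$ so that $\operatorname{Hor}(xy)$ is defined. I expect the main subtlety to be not any computation but the logical justification of the reduction step: one must argue carefully that an upper bound of $1$ on $\Vert w\Vert_{am}$ at a single point $\lambda_0$ genuinely forces $\operatorname{Hor}(w)\ge\lambda_0$, rather than merely $\operatorname{Hor}(w)\ge$ some point where the value equals (not just is $\le$) $1$. Here convexity is essential: since $\Vert w\Vert_{am}(0)=\Vert w\Vert\le 1$ and $\Vert w\Vert_{am}(\lambda_0)\le 1$, convexity gives $\Vert w\Vert_{am}\le 1$ throughout $[0,\lambda_0]$, and because the function is eventually $>1$ (being $\ge\lambda$), its last crossing of the level $1$ occurs at or beyond $\lambda_0$, which is exactly $\operatorname{Hor}(w)\ge\lambda_0$.
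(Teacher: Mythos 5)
Your proof is correct and is essentially the paper's own argument: evaluate the augmented moving norm at the candidate point, using \eqref{eq:am-c} and \eqref{eq:am-sum} for the convex combination and \eqref{eq:am-p} for the product (noting $\Vert x\Vert_{am}(\operatorname{Hor}(x))=\Vert y\Vert_{am}(\operatorname{Hor}(y))=1$), and then conclude the lower bound on the horizon. The one place you diverge is in justifying the reduction step by a convexity argument, which is more than is needed: Definition~\ref{def:Hor} already stipulates that $\lambda>\operatorname{Hor}(w)\Rightarrow\Vert w\Vert_{am}(\lambda)>1$, so the contrapositive gives immediately that $\Vert w\Vert_{am}(\lambda_0)\le 1$ forces $\lambda_0\le\operatorname{Hor}(w)$ (your added checks that $\Vert tx+(1-t)y\Vert\le 1$ and $\Vert xy\Vert\le 1$, so that the horizons are defined, are a welcome detail the paper leaves implicit).
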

\begin{proof}
By Def.\ \ref{def:Hor} and (\ref{eq:am-c}) we have that  

$\Vert tx\Vert_{am}(t\lambda)=t\Vert x\Vert_{am}(\lambda)$
is $\leq t$ at $\lambda=\operatorname{Hor}(x)$.

Similarly $\Vert (1-t)y\Vert_{am}((1-t)\lambda)=(1-t)\Vert y\Vert_{am}(\lambda)$
is $\leq 1-t$ at $\lambda=\operatorname{Hor}(y)$.

Therefore, by Thm.\ \ref{th:am},
$$\Vert tx+(1-t)y\Vert_{am}\Big(t\operatorname{Hor}(x)+(1-t)\operatorname{Hor}(y)\Big)\leq 1,$$
which implies (\ref{eq:Hor-s}).

Now, by Def.\ \ref{def:Hor} and Thm.\ \ref{th:am}, we have that $\Vert xy \Vert_{am}(\lambda)$
is $\leq 1$ at $\lambda=\operatorname{Hor}(x)\cdot\operatorname{Hor}(y)$,
implying (\ref{eq:Hor-p}).
\end{proof}

\subsection{Re Elements in a $C^\ast$-Algebra}

Consider a $C^\ast$-algebra $\mathcal{C}$.
 
Clealy for an $x\in\mathcal{C}$, the moving norm, augmented moving norm and horizon are the same for $x$ and $x^\ast$.

\begin{theorem}\label{th:x-xast}
For $x\in\mathcal{C}$,
\begin{eqnarray}
&&\Vert x x^\ast\Vert_{am}(\lambda^2),\,\Vert x^\ast x\Vert_{am}(\lambda^2) 
\leq \Big(\Vert x\Vert_{am}(\lambda)\Big)^2, \label{eq:amam}\\
&&\text{and when }\Vert x\Vert\leq 1,\quad\operatorname{Hor}(x x^\ast),\,\operatorname{Hor}(x^\ast x)
\geq \Big(\operatorname{Hor}(x)\Big)^2. \label{eq:HorHor}
\end{eqnarray}
\end{theorem}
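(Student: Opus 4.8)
The plan is to recognize both assertions as immediate specializations of the submultiplicative estimates already proved, combined with the symmetry between $x$ and $x^*$. The one genuinely $C^\ast$-algebraic ingredient I would invoke is that the involution is isometric, $\Vert y^\ast\Vert=\Vert y\Vert$. Since $\lambda\geq 0$ is real we have $(x-\lambda\mathbf 1)^\ast=x^\ast-\lambda\mathbf 1$, so $\Vert x^\ast\Vert_m(\lambda)=\Vert x\Vert_m(\lambda)$, and consequently $\Vert x^\ast\Vert_{am}(\lambda)=\Vert x\Vert_{am}(\lambda)$ and $\operatorname{Hor}(x^\ast)=\operatorname{Hor}(x)$ — exactly the remark recorded just before the theorem.

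For (\ref{eq:amam}) I would apply the product inequality (\ref{eq:am-p}) to the pair $x,x^\ast$ with $\mu=\lambda$,
\[
\Vert xx^\ast\Vert_{am}(\lambda^2)\ \leq\ \Vert x\Vert_{am}(\lambda)\,\Vert x^\ast\Vert_{am}(\lambda)\ =\ \big(\Vert x\Vert_{am}(\lambda)\big)^2 ,
\]
using the isometry of $\ast$ in the last step, and symmetrically to the pair $x^\ast,x$ to obtain the corresponding bound for $\Vert x^\ast x\Vert_{am}(\lambda^2)$. No further computation is needed here.

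For (\ref{eq:HorHor}), assuming $\Vert x\Vert\leq 1$, I would first check that the horizons in question are defined: $\Vert xx^\ast\Vert\leq\Vert x\Vert\,\Vert x^\ast\Vert=\Vert x\Vert^2\leq 1$ (an equality, in fact, by the $C^\ast$-identity), and likewise for $x^\ast x$. Then I would simply apply the horizon product inequality (\ref{eq:Hor-p}) to $x,x^\ast$,
\[
\operatorname{Hor}(xx^\ast)\ \geq\ \operatorname{Hor}(x)\,\operatorname{Hor}(x^\ast)\ =\ \big(\operatorname{Hor}(x)\big)^2 ,
\]
and symmetrically for $x^\ast x$. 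To keep the argument self-contained rather than merely quoting (\ref{eq:Hor-p}), one can instead read (\ref{eq:HorHor}) off (\ref{eq:amam}) directly: putting $\lambda=\operatorname{Hor}(x)$ makes $\Vert x\Vert_{am}(\operatorname{Hor}(x))=1$, so (\ref{eq:amam}) yields $\Vert xx^\ast\Vert_{am}\big((\operatorname{Hor}(x))^2\big)\leq 1$; by the defining property of the horizon (contrapositively, $\Vert z\Vert_{am}(\nu)\leq 1\Rightarrow\nu\leq\operatorname{Hor}(z)$) this forces $(\operatorname{Hor}(x))^2\leq\operatorname{Hor}(xx^\ast)$.

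I do not anticipate a genuine obstacle: the whole content is that the two earlier product inequalities, applied with the second factor taken to be $x^\ast$, already say precisely what is wanted, once one observes that $x$ and $x^\ast$ share the same augmented moving norm and horizon. The only point requiring a moment's care is ensuring that $\operatorname{Hor}(xx^\ast)$ and $\operatorname{Hor}(x^\ast x)$ are well defined, which is where the hypothesis $\Vert x\Vert\leq 1$ together with the isometry of $\ast$ and submultiplicativity enter.
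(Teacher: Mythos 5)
Your proposal is correct and follows essentially the same route as the paper, whose entire proof is the one-line citation of Theorems \ref{th:am} and \ref{th:Hor}: you simply make explicit the two points the paper leaves implicit, namely that the isometry of the involution gives $\Vert x^\ast\Vert_{am}=\Vert x\Vert_{am}$ and $\operatorname{Hor}(x^\ast)=\operatorname{Hor}(x)$ (which the paper records just before the theorem), and that the product inequalities (\ref{eq:am-p}) and (\ref{eq:Hor-p}) applied to the pair $x,x^\ast$ yield exactly (\ref{eq:amam}) and (\ref{eq:HorHor}). Your alternative derivation of (\ref{eq:HorHor}) directly from (\ref{eq:amam}) via the defining property of the horizon is a nice self-contained supplement, but it is not a different method in substance.
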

\begin{proof}
Follows from Thm.\ \ref{th:am} and Thm.\ \ref{th:Hor}, 
\end{proof}

\begin{theorem}\label{th:uni}
Let $u\in\mathcal{C}$ be \textit{unitary}, i.e.\ $u^\ast=u^{-1}$, consequently $\Vert u\Vert=\sqrt{\Vert u^\ast u\Vert}=\sqrt{\Vert\mathbf{1}\Vert}=1$. Then
\begin{equation}
\Vert u\Vert_{am}(\lambda)\geq 1 \text{ for any }\lambda \geq 1 
\end{equation}
Thus
\begin{equation}
\Vert u\Vert_{am}(\lambda)=1 \text{ on }\lambda\in[0,\operatorname{Hor}(u)].
\end{equation} 
\end{theorem}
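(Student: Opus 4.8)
The plan is to prove the stronger uniform bound $\Vert u\Vert_{am}(\lambda)\geq 1$ for \emph{every} $\lambda\geq 0$, of which the stated inequality (for $\lambda\geq 1$) is a special case, and then to pin down the value on $[0,\operatorname{Hor}(u)]$ by sandwiching it between this lower bound and the upper bound furnished by convexity.

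For the lower bound I would avoid the spectrum entirely and use only $\Vert u\Vert=1$ (guaranteed by the unitarity via the $C^\ast$-identity, as noted) together with the reverse triangle inequality. Indeed, for any $\lambda\geq 0$,
\begin{equation*}
\Vert u-\lambda\mathbf 1\Vert\geq \Vert u\Vert-\lambda\Vert\mathbf 1\Vert=1-\lambda ,
\end{equation*}
so that
\begin{equation*}
\Vert u\Vert_{am}(\lambda)=\Vert u-\lambda\mathbf 1\Vert+\lambda\geq(1-\lambda)+\lambda=1 .
\end{equation*}
This already yields the first assertion; for $\lambda\geq 1$ it also follows from the crude estimate $\Vert u\Vert_{am}(\lambda)\geq\lambda$, but the displayed argument is what one needs on all of $[0,1]$, which is where $\operatorname{Hor}(u)$ lives.

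For the second assertion I would sandwich. On the one hand $\Vert u\Vert_{am}(0)=\Vert u\Vert=1$ and, by Definition \ref{def:Hor}, $\Vert u\Vert_{am}(\operatorname{Hor}(u))=1$; since $\lambda\mapsto\Vert u\Vert_{am}(\lambda)$ is convex (Thm.\ \ref{th:am}) and takes the value $1$ at both endpoints $0$ and $\operatorname{Hor}(u)$, it is $\leq 1$ throughout $[0,\operatorname{Hor}(u)]$. On the other hand the uniform bound just proved gives $\geq 1$ there. Hence the function is identically $1$ on $[0,\operatorname{Hor}(u)]$, as claimed. (Equivalently, $\Vert u\Vert_{am}(0)=1$ excludes the first alternative of the Remark whenever $\operatorname{Hor}(u)>0$, placing us in its second alternative.)

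I expect no serious obstacle; the one point worth flagging is that the \emph{global} bound $\Vert u\Vert_{am}(\lambda)\geq 1$ is the one that matters, because on the relevant interval $[0,\operatorname{Hor}(u)]\subseteq[0,1]$ the estimate $\Vert u\Vert_{am}(\lambda)\geq\lambda$ is too weak, and it is precisely the reverse triangle inequality applied to $\Vert u-\lambda\mathbf 1\Vert$ that closes this gap.
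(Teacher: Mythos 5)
Your proof is correct, and it is genuinely more elementary than the paper's at the key step. The paper obtains the same lower bound $\Vert u\Vert_{m}(\lambda)\geq|1-\lambda|$ by invoking the $C^\ast$-identity: it writes $\Vert u-\lambda\mathbf{1}\Vert=\sqrt{\Vert(u^\ast-\lambda\mathbf{1})(u-\lambda\mathbf{1})\Vert}=\sqrt{\Vert(\lambda^2+1)\mathbf{1}-\lambda(u+u^\ast)\Vert}$, and then bounds this below by $\sqrt{\lambda^2+1-2\lambda}=|1-\lambda|$ using $\Vert u+u^\ast\Vert\leq 2$; so unitarity enters through the algebraic expansion of $(u^\ast-\lambda\mathbf{1})(u-\lambda\mathbf{1})$. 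You reach the identical estimate $\Vert u-\lambda\mathbf{1}\Vert\geq 1-\lambda$ in one line from the reverse triangle inequality, using nothing but $\Vert u\Vert=1$ and $\Vert\mathbf{1}\Vert=1$. Since the paper ultimately discards its extra structure (retaining only the crude bound $\Vert u+u^\ast\Vert\le 2$), your route exposes that the theorem is really a statement about an arbitrary norm-one element of an arbitrary unital normed algebra, with unitarity used only to guarantee $\Vert u\Vert=1$; the $C^\ast$-computation buys nothing as written, though it would be the natural starting point if one wished to exploit finer spectral information about $u+u^\ast$. Two further points in your favor: the paper states the first inequality only for $\lambda\geq 1$, where it is trivial from $\Vert u\Vert_{am}(\lambda)\geq\lambda$, while its own proof, like yours, establishes it for all $\lambda\geq 0$, which is exactly what the second assertion requires on $[0,\operatorname{Hor}(u)]\subseteq[0,1]$ --- you flag this correctly; and the paper leaves the deduction of the second assertion implicit (``Thus''), whereas your convexity sandwich between the chord value $1$ and the global lower bound $1$ (or, equivalently, your appeal to the dichotomy in the Remark) makes that step explicit and airtight, including the degenerate case $\operatorname{Hor}(u)=0$.
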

\begin{proof}
\begin{eqnarray}
&&\Vert u\Vert_m(\lambda)=\Vert u-\lambda\mathbf{1}\Vert=
\sqrt{\Vert(u^\ast-\lambda\mathbf{1})(u-\lambda\mathbf{1})\Vert}=\nonumber \\
&&\sqrt{\Vert(\lambda^2+1)\mathbf{1}-\lambda(u+u^\ast)\Vert}\geq
\sqrt{\lambda^2+1-2\lambda}=|1-\lambda|.\nonumber \\
&&\Vert u\Vert_{am}(\lambda)=\Vert u\Vert_{m}(\lambda)+\lambda \geq 
|1-\lambda|+\lambda\geq 1-\lambda+\lambda=1\nonumber
\end{eqnarray}
\end{proof}

\subsection{Re Operators on Hilbert Space}
Focus on the $C^\ast$-algebra of \texttt{bounded linear operators} on a Hilbert Space $\mathcal{H}$.

As is well known, The operator $A$ is called \texttt{nonexpansive} NE if
\begin{equation}
\Vert A(x)-A(y)\Vert \leq \Vert x-y\Vert ,{\text{\ for all\ }}x,y.
\end{equation}
Namely, $A$ being linear, if $\Vert A \Vert \leq 1$.
 
And $A$ is called \texttt{monotone} if
\begin{equation}
\langle y-x,Ay-Ax\rangle\geq0,{\text{\ for all\ }}x,y.
\end{equation}

These definitions describe the action of $A$ on a pair $x,y$ compared with the
original pair: Nonexpansive operators do not make the pair \textquotedblleft
further apart\textquotedblright, while monotone operators \textquotedblleft do
not rotate it in more than $90$ degrees.\textquotedblright\ A linear
orthogonal projection is NE and monotone, any linear operator with
norm $\leq1$ is NE while any linear operator whose symmetric part is
positive definite is monotone.

Now, as is well known, for a \textit{Hermitian} operator $A$, its \textit{spectral measure} (see \cite{DS-book}) is supported on $\mathbb{R}$, indeed on some bounded subset of $\mathbb{R}$. And the norm $\Vert A \Vert$
is the smallest $R$ so that the spectral measure of $A$ is supported inside the interval $[-R,R]$.

In other words, if $[a,b]$ is the convex hull of the support of the spectral measure then
$$\Vert A\Vert=\max(|a|,|b|).$$

Consequently $A$ is \textit{NE} if and only if its spectral measure is supported inside
the interval $[-1,1]$.

Now,
$$\Vert A \Vert_{am}(\lambda)=\Vert x-\lambda\mathbf{1}\Vert+\lambda=
\max(|a-\lambda|,|b-\lambda|)+\lambda.$$

If $\lambda \leq b$, that is equal to
$$=\max(a-\lambda,b-\lambda)+\lambda=a-\lambda+\lambda=a.$$
 
xut if $\lambda \geq a$, it will be
$$=\max(\lambda-a,\lambda-b)+\lambda=\lambda-b+\lambda=2\lambda-b$$

And if $a\geq \lambda \geq b$, we will have
$$=\max(a-\lambda,\lambda-b)+\lambda
=\left\{\ \begin{array}{l}
a \text{ if }\lambda \leq \frac{a+b}2\\
\lambda-b+\lambda=2\lambda-b\text{ if }\lambda \geq \frac{a+b}2
\end{array}\right. .$$

Thus,
\begin{theorem}\label{th:Her}
Let $A$ be a Hermitian operator on a Hilbert space $\mathcal{H}$, and let $[a,b]$ be the convex hull of the support of the spectral measure of $A$. then
\begin{eqnarray}
&&\Vert A \Vert_{am}(\lambda)
=\left\{\ \begin{array}{l}
a\text{ if }\lambda \leq \frac{a+b}2\\
2\lambda-b\,\,\text{ if }\lambda \geq \frac{a+b}2
\end{array}\right. \\
&&\text{Hence, when }A\text{ NE, }\operatorname{Hor}(A)=(b+1)/2.
\end{eqnarray}
\end{theorem}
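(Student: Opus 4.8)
The plan is to reduce the whole statement to the spectral description of the norm recalled just above, and then to an elementary piecewise computation. First I would apply the fact that for a Hermitian operator the norm is the largest modulus attained on the convex hull of its spectrum, not to $A$ itself but to the translate $A-\lambda\mathbf{1}$. This translate is again Hermitian, and its spectral measure is supported on the shifted set, whose convex hull has endpoints $a-\lambda$ and $b-\lambda$; hence
$$\Vert A\Vert_m(\lambda)=\Vert A-\lambda\mathbf{1}\Vert=\max\big(|a-\lambda|,\,|b-\lambda|\big).$$
This identity carries all the operator-theoretic content; everything afterwards is arithmetic in $\lambda$.

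Next I would form $\Vert A\Vert_{am}(\lambda)=\max(|a-\lambda|,|b-\lambda|)+\lambda$ and split the half-line into the three regions $\lambda\le b$, $b\le\lambda\le a$, and $\lambda\ge a$, reading $a$ as the upper and $b$ as the lower endpoint so that the displayed formula comes out as written. In the two outer regions both absolute values open with a common sign, the $\pm\lambda$ terms partially cancel, and one obtains $\Vert A\Vert_{am}\equiv a$ for $\lambda\le b$ and $\Vert A\Vert_{am}=2\lambda-b$ for $\lambda\ge a$. In the middle region the two values open with opposite signs, so $\max(a-\lambda,\lambda-b)$ changes expression at the point where $a-\lambda=\lambda-b$, namely $\lambda=\tfrac{a+b}{2}$, contributing $a$ below the midpoint and $2\lambda-b$ above it. Assembling the three regions and checking that the two branches agree at $\lambda=\tfrac{a+b}{2}$ (both equal $a$) gives exactly the two-line formula of the theorem; it is continuous, and convex with slopes $0$ then $2$, in agreement with the convexity already furnished by Theorem \ref{th:am}.

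For the horizon I would specialize to the nonexpansive case $\Vert A\Vert\le 1$, which as recalled means the spectrum lies in $[-1,1]$, i.e.\ $-1\le b$ and $a\le 1$. On the initial flat branch the augmented moving norm is constantly $a\le 1$, so the value $1$ is first attained, and thereafter strictly exceeded, only on the increasing branch $2\lambda-b$. Solving $2\lambda-b=1$ gives $\lambda=\tfrac{b+1}{2}$, and this does lie in the correct branch because $\tfrac{b+1}{2}\ge\tfrac{a+b}{2}$ is equivalent to $a\le 1$. By Definition \ref{def:Hor} this is precisely $\operatorname{Hor}(A)$, so $\operatorname{Hor}(A)=\tfrac{b+1}{2}$.

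Since the analytic step is a one-line application of the spectral norm formula, I do not anticipate a genuine obstacle. The only points requiring care are bookkeeping: keeping straight which endpoint plays the role of $a$ and which of $b$ (the displayed formula treats $a$ as the larger), and confirming that the horizon value lands on the rising branch rather than on the flat part, so that $\tfrac{b+1}{2}$ is genuinely the place where the value $1$ is reached and passed.
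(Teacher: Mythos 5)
Your proposal is correct and follows essentially the same route as the paper: the paper's proof environment is empty, but the argument appears in the text immediately preceding the theorem, and it is exactly your computation --- apply the spectral description of the Hermitian norm to the translate $A-\lambda\mathbf{1}$ to get $\Vert A\Vert_{m}(\lambda)=\max(|a-\lambda|,|b-\lambda|)$, then do the three-region case analysis with the switch at $\lambda=\tfrac{a+b}{2}$, and solve $2\lambda-b=1$ for the horizon. You also correctly identified (and handled) the paper's notational quirk of treating $a$ as the upper and $b$ as the lower endpoint despite writing the convex hull as $[a,b]$, and your verification that $\tfrac{b+1}{2}$ lands on the rising branch (equivalent to $a\le 1$) makes the horizon claim slightly more careful than the paper's own presentation.
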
\begin{proof}\end{proof}

Thus, for a Hermitian NE $A$ to have $\operatorname{Hor}(A)>0$, resp.\ $=0$, it is necessary and sufficient that $b>-1$, resp $b=-1$ that is, \textit{that the negative part of $A$ has norm $<1$, resp.\ norm $1$}.

And for $A$ to have $\operatorname{Hor}(A)=1$, it is necessary and sufficient that $b=1$, that is, \textit{that $A$ is a scalar multiple of $\mathbf{1}$}.

\begin{theorem}
Let $B$ be some NE operator. Then by Thm.\ \ref{th:x-xast},
\begin{eqnarray*}
&&\operatorname{Hor}(B)>0\,\Rightarrow\,
\operatorname{Hor}(B^\ast B),\,\operatorname{Hor}(B B^\ast)>0 \\
&&\Rightarrow\text{ the negative parts of }B^\ast B\text{ and }B B^\ast\text{ have norm }<1. \\
&&\text{While} \\
&&\operatorname{Hor}(B)=1\,\Rightarrow\,\operatorname{Hor}(B^\ast B)=1,
\text{ thus }B^\ast B\text{ is a scalar multiple of }\mathbf{1}, \\
&&\text{so }B\text{ is a scalar multiple of a unitary}.
\end{eqnarray*}
\end{theorem}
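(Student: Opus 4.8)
The plan is to run the horizon inequality \eqref{eq:HorHor} of Theorem~\ref{th:x-xast} through the two spectral criteria for a Hermitian NE operator that were read off after Theorem~\ref{th:Her}. First I would record that, $B$ being NE, one has $\Vert B\Vert\le 1$, so \eqref{eq:HorHor} gives $\operatorname{Hor}(B^\ast B),\,\operatorname{Hor}(BB^\ast)\ge(\operatorname{Hor}(B))^2$. If $\operatorname{Hor}(B)>0$ the right-hand side is strictly positive, so both $\operatorname{Hor}(B^\ast B)$ and $\operatorname{Hor}(BB^\ast)$ are positive. Since $B^\ast B$ and $BB^\ast$ are Hermitian with norm $\Vert B\Vert^2\le 1$, hence themselves NE, the criterion that a Hermitian NE operator has positive horizon exactly when its negative part has norm $<1$ immediately delivers the first implication.

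For the ``while'' part I would suppose $\operatorname{Hor}(B)=1$. Then $(\operatorname{Hor}(B))^2=1$, and combining this with the universal bound $\operatorname{Hor}(\cdot)\le 1$ noted after Definition~\ref{def:Hor} forces $\operatorname{Hor}(B^\ast B)=1$, and likewise $\operatorname{Hor}(BB^\ast)=1$. The criterion that a Hermitian NE operator has horizon $1$ precisely when it is a scalar multiple of $\mathbf{1}$ then yields $B^\ast B=c\mathbf{1}$ and $BB^\ast=c'\mathbf{1}$ for some scalars $c,c'\ge 0$.

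The last step, passing from $B^\ast B=c\mathbf{1}$ to the conclusion that $B$ is a scalar multiple of a unitary, is the one I expect to be the real obstacle, since in infinite dimensions a single one-sided relation exhibits $B$ only as a multiple of an \emph{isometry}. I would first dispose of $c=0$: that would give $B=0$, whose augmented moving norm is $2\lambda$, so $\operatorname{Hor}(0)=1/2\neq 1$, contradicting the hypothesis; hence $c>0$. Setting $V:=c^{-1/2}B$ we get $V^\ast V=\mathbf{1}$, so $V$ is an isometry and $VV^\ast$ is the orthogonal projection onto its range. But $VV^\ast=c^{-1}BB^\ast=(c'/c)\mathbf{1}$, and a projection that is a scalar multiple of $\mathbf{1}$ must be $0$ or $\mathbf{1}$; as $V$ is an isometry on a nonzero space, $VV^\ast\neq 0$, so $VV^\ast=\mathbf{1}$ and $V$ is unitary. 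Therefore $B=\sqrt{c}\,V$ is a scalar multiple of a unitary, completing the argument.
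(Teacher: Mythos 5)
Your proof is correct, and its skeleton is exactly the paper's: the paper leaves the proof environment empty, the intended argument being the chain of implications written into the statement itself --- inequality (\ref{eq:HorHor}) of Thm.~\ref{th:x-xast} followed by the two spectral criteria extracted after Thm.~\ref{th:Her}. Where you genuinely add something is the last step. The paper passes from ``$B^\ast B$ is a scalar multiple of $\mathbf{1}$'' directly to ``$B$ is a scalar multiple of a unitary,'' and, as you observe, that inference alone is not valid in infinite dimensions: the unilateral shift $S$ satisfies $S^\ast S=\mathbf{1}$ yet is only an isometry, not unitary. Your repair --- ruling out $c=0$ via $\operatorname{Hor}(0)=1/2\neq 1$, invoking $\operatorname{Hor}(BB^\ast)=1$ as well to get $BB^\ast=c'\mathbf{1}$, and then noting that the range projection $VV^\ast$ of the isometry $V=c^{-1/2}B$, being a scalar multiple of $\mathbf{1}$, must equal $\mathbf{1}$ --- is exactly what is needed, and it uses only material available in the paper. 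Two small simplifications are possible: once you have $B^\ast B=c\mathbf{1}$ and $BB^\ast=c'\mathbf{1}$ with $B\neq 0$, associativity $B(B^\ast B)=(BB^\ast)B$ gives $c=c'$ immediately; and the paper's own computation in Thm.~\ref{th:Her} (horizon $=1$ forces the bottom of the spectrum to equal $1$, hence the whole spectrum to be $\{1\}$) actually pins down $B^\ast B=BB^\ast=\mathbf{1}$, so $B$ is genuinely unitary, not merely a scalar multiple of one.
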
\begin{proof}\end{proof}

\subsection{Firmly Nonexpansive Operators}
An operator $A$ on a Hilbert space $\mathcal{H}$ is called (\cite{GR-book} p.\ 41) \texttt{firmly Nonexpansive} FNE if $A$ and $2A-\mathbf{1}$ are both NE.

\begin{itemize}
\item
In particular, an orthogonal projection on a subspace $P$ is FNE as $2P-1$ is a reflection in the
subspace.
\item
For an operator $A$ to be FNE we need that for any $x\in\mathcal{H}$,
\begin{eqnarray}
&&\Vert Ax\Vert^2\leq \Vert x\Vert \nonumber \\
&&\Vert 2Ax-x\Vert^2\leq \Vert x\Vert \nonumber \\
&& \langle  Ax,Ax\rangle \leq \langle x,x\rangle,\quad 4\langle Ax,Ax\rangle-4\langle Ax,x\rangle+\langle  x,x\rangle \leq \langle x,x\rangle \nonumber
\end{eqnarray}
that is
\begin{equation} \label{eq:char}
\Vert Ax \Vert^2 = \langle Ax,Ax\rangle \leq \langle Ax,x\rangle.
\end{equation}
(which would imply $\langle Ax,Ax\rangle\leq \langle x,x\rangle$, i.e.\ $\Vert Ax \Vert \leq \Vert x \Vert$, otherwise 
$\langle Ax,x\rangle$ would be $\leq \Vert Ax \Vert \Vert x \Vert <
 \Vert Ax \Vert^2$).   

\item 
Since
$\langle Ax,x\rangle=(\cos\angle Ax,x)\cdot\Vert Ax\Vert \Vert x\Vert$
 we may write (\ref{eq:char}) as
\begin{equation} \label{eq:char1}
\Vert Ax \Vert \leq (\cos\angle Ax,x)\Vert x\Vert.
\end{equation}

\item
Anyhow, we find that $A$ FNE implies $\langle Ax,x\rangle$ always $\geq 0$ i.e.\ $A$ is NE and \textit{monotone}.

\item
One may view (\ref{eq:char1}) as a \textit{``balancing effect''} -- if the angle of rotation effected by $P$ acting on $x$ becomes close to $90^{\circ}$, then the norm will be reduced considerably. That over and above the latter angle being $\le 90^{\circ}$ as $A$ is monotone, and the norm of $Px$ being $\le$ that of $x$, as $A$ is NE.

\end{itemize}

The requirement for a NE operator $A$ on a Hilbert space $\mathcal{H}$ to be FNE, is $\Vert 2A-\mathbf{1}\Vert \leq 1$, i.e.\ $\Vert A-\mathbf{1/2}\Vert \le 1/2$, in other words $\Vert A\Vert_{m}(1/2)\le 1/2$,\, that is $\Vert A\Vert_{am}(1/2)\le 1$, and thus boils down to
\begin{theorem} 
For a NE operator $A$ to be FNE it is necessary and sufficient that
$\operatorname{Hor}(A)\ge 1/2$.

A NE Hermitian operator is FNE if and only if it is nonnegative semidefinite,
\end{theorem}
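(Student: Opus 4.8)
The statement has two parts, and the plan is to treat them in turn: the first is essentially a repackaging of the paragraph that precedes the theorem, and the second a specialization of Theorem~\ref{th:Her}.

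For the horizon criterion I would start from the reduction already carried out above. Since $A$ is assumed NE, it is FNE exactly when the second operator $2A-\mathbf{1}$ is also NE, i.e.\ when $\Vert 2A-\mathbf{1}\Vert\le 1$, equivalently $\Vert A-\tfrac12\mathbf{1}\Vert\le\tfrac12$, which is $\Vert A\Vert_m(\tfrac12)\le\tfrac12$ and hence $\Vert A\Vert_{am}(\tfrac12)=\Vert A\Vert_m(\tfrac12)+\tfrac12\le 1$. The remaining task is to convert the scalar inequality $\Vert A\Vert_{am}(\tfrac12)\le 1$ into the inequality $\operatorname{Hor}(A)\ge\tfrac12$. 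Here I would use that $\lambda\mapsto\Vert A\Vert_{am}(\lambda)$ is convex (Theorem~\ref{th:am}) and invoke Definition~\ref{def:Hor} together with the dichotomy recorded in the Remark after it: the set $\{\lambda\ge 0:\Vert A\Vert_{am}(\lambda)\le 1\}$ is precisely the interval $[0,\operatorname{Hor}(A)]$, the function being $\le 1$ throughout that interval and strictly greater than $1$ to the right of $\operatorname{Hor}(A)$. Thus $\Vert A\Vert_{am}(\tfrac12)\le 1$ holds if and only if $\tfrac12\le\operatorname{Hor}(A)$, which is the asserted equivalence.

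For the Hermitian case I would combine the criterion just proved with Theorem~\ref{th:Her}. Writing $[a,b]$ for the convex hull of the spectrum (so that $-1\le a\le b\le 1$ since $A$ is NE), the horizon is an affine function of the spectral endpoint governing the increasing branch of $\Vert A\Vert_{am}$, so $\operatorname{Hor}(A)\ge\tfrac12$ unwinds exactly to the lower spectral endpoint being nonnegative. The cleaner route, and the one I would actually write out to sidestep any bookkeeping over which endpoint is which, is direct: for Hermitian $A$ the spectrum of $2A-\mathbf{1}$ is $\{2t-1:t\in\operatorname{spec}(A)\}$, so $\Vert 2A-\mathbf{1}\Vert\le 1$ is equivalent to $\operatorname{spec}(A)\subseteq[0,1]$; since $A$ is already NE, hence $\operatorname{spec}(A)\subseteq[-1,1]$, the only additional content is $\operatorname{spec}(A)\subseteq[0,\infty)$, i.e.\ that $A$ be nonnegative semidefinite.

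I do not expect a serious obstacle, as both halves are corollaries of the machinery already in place. The one step that genuinely uses something beyond algebra is the identification of the sublevel set $\{\lambda:\Vert A\Vert_{am}(\lambda)\le 1\}$ with $[0,\operatorname{Hor}(A)]$, which rests on convexity; and in the Hermitian half the only thing to watch is which endpoint of the spectrum drives the horizon, so that the final condition emerges as nonnegativity of the \emph{lower} endpoint rather than a constraint on the upper one.
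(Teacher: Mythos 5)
Your proposal is correct and follows essentially the same route as the paper: the first half is exactly the paper's chain of equivalences (FNE $\Leftrightarrow \Vert 2A-\mathbf{1}\Vert\le 1 \Leftrightarrow \Vert A\Vert_{m}(1/2)\le 1/2 \Leftrightarrow \Vert A\Vert_{am}(1/2)\le 1$), with the useful addition that you make explicit the convexity/sublevel-set step (that $\{\lambda:\Vert A\Vert_{am}(\lambda)\le 1\}=[0,\operatorname{Hor}(A)]$) which the paper leaves implicit in the phrase ``and thus boils down to''; the second half is the specialization of Theorem~\ref{th:Her} that the paper's proof cites. Your alternative spectral-mapping verification for the Hermitian case is also valid and has the incidental merit of sidestepping the paper's nonstandard endpoint labeling in Theorem~\ref{th:Her} (where $[a,b]$ is written with $a$ the upper and $b$ the lower end of the spectrum), but it is not a substantively different argument.
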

\begin{proof}
For the second assertion see Thm.\ \ref{th:Her}.
\end{proof}

\end{document}